\newcommand{\Z}{\mathbb{Z}}
\newcommand{\Q}{\mathbb{Q}}
\newtheorem{lemme}{Lemma}[section]
\newtheorem{coro}{Corollary}[section]
\newtheorem{rmq}{Remark}[section]
\newtheorem{thm}{Theorem}[section]
\begin{document}

\title{Explicit lower bounds for the height in Galois extensions of number fields \footnote{This work has been partially supported by the Institut Fourier, Université Grenoble1, UMR 5582 duCNRS, 100 rue des mathématiques, BP 74, 38402 St Martin d’hères.}} 
\author{Jonathan Jenvrin}
\date{}

\maketitle

\begin{abstract}
    Amoroso and Masser proved that for every real $\epsilon > 0$, there exists a constant $c(\epsilon)>0$, with the property that, for every algebraic number $\alpha$ such that $\Q(\alpha)/\Q$ is a Galois extension, the height of $\alpha$ is either 0 or at least $c(\epsilon)  [\Q(\alpha):\Q]^{-\epsilon}$. In the present article, we establish an explicit version of the aforementioned theorem.
\end{abstract}

\section{Introduction}
In this article, we let $\overline{\Q}$ be a fixed algebraic closure of $\Q$.
For an algebraic number $\alpha$ of degree $d$ over $\Q$, we denote by $h(\alpha)$ its absolute logarithmic Weil height defined as
$$h(\alpha) = \frac{1}{d} \left( \log \left(|a| \prod_{i=1}^{d} \max\left(1, |\alpha_{i}|\right) \right) \right)$$
where $a$ is the leading coefficient of the minimal polynomial of $\alpha$ over $\mathbb{Z}$, and $\alpha_{1}, \dots, \alpha_{d}$ are the conjugates of $\alpha$ over $\Q$. Notice in particular that $h(\alpha) \ge 0$ for all $\alpha \in \overline{\Q}.$

While, by Kronecker's theorem (see for instance \cite[Theorem 1.5.9]{BombieriGubler}), it is well-known that $h(\alpha) = 0$ if and only if $\alpha$ is either $0$ or a root of unity, in \cite{Lehmer} Lehmer raised the question of whether there exists a constant $c > 0$ such that
$$ h(\alpha) \ge \frac{c}{d}$$
whenever $h(\alpha)$ is not zero.
The existence of such a constant is nowadays known as Lehmer's conjecture and has been proved for various classes of algebraic numbers, but is still open in general. For instance, the conjecture is obviously true for $\alpha$ not a unit with $c=\log(2)$. While for $\alpha$ non-reciprocal (which is always the case for $d$ odd) its validity was proved in \cite{SmythCJ} with $c=3h(\theta)=\log(\theta)$, where $\theta$ is the real root $>1$ of $X^{3}-X-1$. The most notable progress toward Lehmer's conjecture is Dobrowolski's result \cite[Theorem 1]{[12]}, later made explicit by Voutier in \cite[Theorem on p. 83]{voutier}, proving that if $h(\alpha) \neq 0$, then
\[ h(\alpha) \ge \frac{1}{4d} \left( \frac{\log \log d}{\log d} \right)^{3}. \]
This implies that for any $\epsilon > 0$, there is an explicit constant $\tilde{c}(\epsilon) > 0$ such that either $h(\alpha) = 0$ or $h(\alpha) \ge \tilde{c}(\epsilon) d^{-1-\epsilon}$.

While Dobrowolski's result remains the only unconditional one on this problem, it is possible to prove that specific classes of algebraic numbers satisfy even stronger variants of Lehmer's conjecture, such as the Bogomolov property introduced by Bombieri and Zannier in \cite{bombieriZannier2001note}. A set of algebraic numbers \( S \) has the \emph{Bogomolov property (B)} if there exists a constant \( c=c(S)>0 \) such that for every \( \alpha \in S \), either \( h(\alpha)=0 \) or \( h(\alpha) \geq c \).

A set of algebraic numbers that has garnered attention in recent years is that of all \( \alpha \in \overline{\Q} \) such that \( \Q(\alpha)/\Q \) is Galois.

Amoroso and David proved that Lehmer's conjecture holds for elements of this set (see \cite[Corollary 1.7]{[7]}). Later, Amoroso and Masser \cite[Theorem 3.3]{[1]} showed an even stronger result: for any \( \epsilon > 0 \), there exists a positive effective constant \( c(\epsilon) \) such that, for every \( \alpha \in \overline{\Q} \) of degree \( d \) over \( \Q \) and not a root of unity, such that \( \Q(\alpha)/\Q \) is Galois, one has
\begin{align*}
  \label{thm-main}
  \tag{$\star$}  
  h(\alpha) \ge c(\epsilon) d^{-\epsilon}.
\end{align*}

This strong result raises the question of whether the set of algebraic numbers that generate a Galois extension over $\Q$ satisfies Property (B). A natural way to tackle this question is to fix the Galois group \( G \) of \( \mathbb{Q}(\alpha)/\mathbb{Q} \). The answer is positive when \( G \) is abelian \cite{[13]}, dihedral \cite[Corollary 1.3]{Amo_Zannier-DihedralCase}, has an exponent bounded by an absolute constant \cite[Corollary 1.7]{Amoroso_David_Zannier-OnFieldWithPropertyB} or has odd order (since the field of totally real numbers satisfies Property (B) by \cite[Corollary 1]{[10]}). An even stronger result has been proven for some classes of generators of Galois extensions of $\Q$ of group $\mathfrak{S}_{n}$ in \cite[Theorem 1.2 and Theorem 1.3]{amoroso2018mahler}, or \( \mathfrak{A}_n \) in \cite[Theorem 1.4 and Theorem 1.5]{jonathanAlternatinggroup}; in these cases the height of such generators goes to infinity with $n$. All these results give evidences for a positive answer to the aforementioned question, which is still open in general.

The goal of our article is to give an explicit version of (\ref{thm-main}). Our main result is the following:

\begin{thm}\label{main thm}
    Suppose $\alpha \in \overline{\mathbb{Q}}^{*}$ is of degree $d$ over $\Q$. If $\alpha$ is not a root of unity and $\Q(\alpha)/\Q$ is Galois, then \begin{equation*}
     h(\alpha) \ge 10^{-8}\exp{ \left(-\frac{49}{2} \log(3d)^{3/4}\log(\log(3d))   \right)}.
\end{equation*}

\end{thm}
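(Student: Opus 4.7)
The plan is to follow the transcendence-theoretic strategy of Amoroso--Masser \cite{[1]} and track every constant explicitly, so that the final optimization produces the numerical values $10^{-8}$ and $49/2$ displayed in the statement. The core architecture is: construct an auxiliary polynomial $P\in\Z[X_{1},\dots,X_{T}]$ that vanishes on a controlled set of tuples of Galois conjugates of $\alpha$, then obtain an arithmetic contradiction by evaluating $P$ at Frobenius-twisted points of the form $(\sigma_{1}(\alpha)^{p},\dots,\sigma_{T}(\alpha)^{p})$, and finally optimize all the parameters against $d$ and $h(\alpha)$.

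First, I would fix parameters: a number of variables $T$ of rough size a power of $\log(3d)$, a partial degree $L$ in each variable, an interpolation/vanishing order, and a set $\mathcal{P}$ of primes $p$ of controlled size (taken from a short interval to avoid Chebotarev-type density statements, which have weak explicit forms). A quantitative Siegel's lemma of Bombieri--Vaaler type then yields a nonzero $P\in\Z[X_{1},\dots,X_{T}]$ with an explicit bound on its naive height that is linear in $L,T$ and in $h(\alpha)$, provided the number of linear vanishing conditions is strictly smaller than the number of unknowns $(L+1)^{T}$.

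Second, I would use the Galois hypothesis to run the Dobrowolski-style extrapolation. For any prime $p$ unramified in $\Q(\alpha)/\Q$ and any prime $\mathfrak{P}$ of $\Q(\alpha)$ above $p$, the associated Frobenius $\phi\in\mathrm{Gal}(\Q(\alpha)/\Q)$ satisfies $\phi(\beta)\equiv\beta^{p}\pmod{\mathfrak{P}}$ for every algebraic integer $\beta\in\Q(\alpha)$, and crucially $\phi(\alpha)$ remains a conjugate of $\alpha$ because the extension is Galois. Applying $P$ then yields an algebraic integer whose reduction modulo $\mathfrak{P}$ equals $P(\phi(\sigma_{1}(\alpha)),\dots,\phi(\sigma_{T}(\alpha)))$, a value lying in the vanishing locus of $P$ by construction. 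A zero-estimate (a determinantal/interpolation argument bounding the multiplicity of vanishing of such an auxiliary polynomial on Galois orbits) guarantees that some such evaluation is nonzero; coupled with the archimedean size bound, which is small when $h(\alpha)$ is small, the product formula gives an inequality of shape $h(\alpha)\gtrsim(\text{explicit})\cdot p^{-1}\cdot(\text{small correction})$.

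The main obstacle is the explicit simultaneous optimization of $T,L$ and the prime range. Three substeps must be carried out with sharp constants: (i) Siegel's lemma must be applied in a form whose constant does not degrade with $T$; (ii) since explicit Chebotarev bounds are too weak, one must extract sufficiently many admissible primes from a dyadic interval using an explicit prime number theorem with error term, which forces the Frobenius argument to be run in parallel over several primes; (iii) the zero-estimate/multiplicity lemma must be made quantitative in $d$, $T$, and $L$. The exponent $\log(3d)^{3/4}\log\log(3d)$ emerges as the optimum when balancing the Siegel-lemma deficit against the prime size $p\approx\log(3d)^{c}$ and the extrapolation gain; tuning the remaining free parameters tightly is what produces the stated numerical constants.
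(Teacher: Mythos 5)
Your proposal takes a fundamentally different route from the paper, and it has a genuine gap at the structural level.

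The paper does \emph{not} construct an auxiliary polynomial, invoke Siegel's lemma, or perform a Frobenius/zero-estimate extrapolation anywhere. Instead it treats two explicit Dobrowolski-type results as black boxes --- Theorem~\ref{thm in rank r} (Amoroso--Viada, a lower bound on products of heights of multiplicatively independent numbers) and Theorem~\ref{thm in rank 1} (Amoroso--Delsinne, a relative Dobrowolski bound over abelian base fields) --- and then combines them by a \emph{dichotomy on the multiplicative rank} $\rho$ of the group generated by the conjugates of~$\alpha$. When $\rho > \log(3d)^{1/4}$, one applies the first result to $r \approx \log(3d)^{1/4}$ multiplicatively independent conjugates (all of which lie in $\Q(\alpha)$ precisely because the extension is Galois). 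When $\rho \le \log(3d)^{1/4}$, the Galois action on the rank-$\rho$ lattice generated by the $k$-th powers of the conjugates gives an embedding $\mathrm{Gal}(L/\Q) \hookrightarrow \mathrm{GL}_\rho(\Z)$, and R\'emond's bound on finite subgroups of $\mathrm{GL}_\rho(\Z)$ (Lemma~\ref{3^rho^2}) forces $[L:\Q]$ to be tiny; since $\alpha^k \in L$ and $\alpha$ has bounded relative degree over the abelian field $\Q(\zeta_k)$, the relative Dobrowolski bound applies with a controlled relative degree. The exponent $\log(3d)^{3/4}\log\log(3d)$ arises from balancing these two regimes, not from tuning parameters inside a single auxiliary-polynomial construction.

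This matters because your proposed single-pass transcendence argument is, in essence, the proof of Dobrowolski's theorem itself, which cannot by itself escape the $1/d$ barrier: a Siegel--Frobenius extrapolation over $\Q$ yields roughly $h(\alpha) \gg d^{-1}(\log\log d / \log d)^3$, and the mere observation that Frobenius permutes the conjugates (true in any Galois extension) does not upgrade this to $d^{-\epsilon}$. The Galois hypothesis must be exploited \emph{structurally}, and the mechanism the paper (following Amoroso--Masser) uses for that is exactly the rank dichotomy combined with the $\mathrm{GL}_\rho(\Z)$ reduction, neither of which appears in your outline. Your proposal also offers no route to recover the specific constants $10^{-8}$ and $49/2$; in the paper these come from explicit intermediate bounds in Lemma~\ref{minoration avec dépendance en rho} ($g_1$ and $g_2$), from Stirling, and from the totient estimates of Lemma~\ref{minoration explicite de l'indicatrice d'euler}, none of which you identify. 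In short, the key idea --- split on $\rho$ vs.\ $\log(3d)^{1/4}$ and use two different off-the-shelf explicit lower bounds in the two regimes --- is missing, and without it the proposed approach would not reach the stated strength of the theorem.
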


As an easy corollary, we obtain an explicit version of Amoroso and Masser's result \cite[Theorem 3.3]{[1]}.

\begin{coro} \label{explicitation de c(epsilon)}
    For every $\epsilon>0$ and for every $\alpha$ of degree $d$, not a root of unity, such that $\Q(\alpha)/\Q$ is Galois, one has $$h(\alpha) \ge c(\epsilon) d^{-\epsilon}$$
    where $$c(\epsilon)=10^{-8}\left( \frac{1}{3} \right)^{\epsilon} \exp{\left( -181 \left( \frac{724}{5\epsilon}\right)^{4}- \left( \frac{724}{5\epsilon}\right)^{5} \right)}.$$
    
\end{coro}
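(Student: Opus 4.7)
Setting $L = \log(3d)$ and $A = \frac{724}{5\epsilon}$, the identity $d^{-\epsilon} = 3^{\epsilon} e^{-\epsilon L}$ reduces, via Theorem \ref{main thm}, the claimed bound $h(\alpha) \ge c(\epsilon) d^{-\epsilon}$ to the analytic inequality
\begin{equation*}
\tfrac{49}{2}\, L^{3/4}\log L \;\le\; \epsilon L + 181 A^{4} + A^{5}, \qquad L \ge \log 3.
\end{equation*}

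My plan is to prove this by first splitting on the size of $\epsilon$, then on the size of $L$. First, dispose of the range $\epsilon \ge \tfrac{98}{e}$: since $L \mapsto (\log L)/L^{1/4}$ attains its maximum $4/e$ at $L = e^{4}$, one has $\tfrac{49}{2}(\log L)/L^{1/4} \le \tfrac{98}{e} \le \epsilon$, so the term $\epsilon L$ alone dominates. For $\epsilon < \tfrac{98}{e}$, equivalently $A > 4$, I would split at the threshold $L = A^{5}$. On $[\log 3, A^{5}]$, monotonicity of $t \mapsto t^{3/4}\log t$ on $[e^{-4/3},\infty)$ yields $L^{3/4}\log L \le 5 A^{15/4}\log A$, and the same maximization applied to $A$ gives $A^{-1/4}\log A \le 4/e$, hence $\tfrac{245}{2}A^{15/4}\log A \le \tfrac{490}{e}A^{4} < 181\,A^{4}$, settling this range. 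On $[A^{5},\infty)$, set $\Phi(L) := \tfrac{49}{2}L^{3/4}\log L - \epsilon L$; its derivative $\tfrac{49}{8}L^{-1/4}(3\log L + 4) - \epsilon$ is nonpositive at $L = A^{5}$ (this reduces, after multiplication by $8A^{5/4}/49$ and using $\epsilon = 724/(5A)$, to $15\log A + 4 \le \tfrac{5792}{245}A^{1/4}$, a routine calculus check for $A \ge 1$), and the factor $L^{-1/4}(3\log L + 4)$ is itself decreasing beyond $L = e^{8/3} < A^{5}$, so $\Phi$ is decreasing on $[A^{5},\infty)$. Hence $\Phi(L) \le \Phi(A^{5}) \le 181 A^{4}$ by the previous subcase (the $-\epsilon A^{5}$ term only helps), and the desired inequality follows.

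The main technical point is the tight numerical inequality $\tfrac{490}{e} \approx 180.19 < 181$, which essentially forces the coefficient $181$ appearing in the statement; the choice of threshold $A^{5}$ and the form of the exponents in $c(\epsilon)$ are calibrated so that the derivative estimate and the $L \le A^5$ bound align on the nose. Once the threshold is fixed, the remaining steps reduce to elementary one-variable calculus on explicit functions.
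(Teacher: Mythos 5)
Your proof is correct, and it does establish the corollary, but via a genuinely different decomposition from the paper's. The paper first absorbs the iterated logarithm globally, using the maximum of $t\mapsto(\log\log(3t))/(\log(3t))^{1/20}$ to pass from $\tfrac{49}{2}\log(3d)^{3/4}\log\log(3d)$ to $\tfrac{490}{e}\log(3d)^{4/5}\le 181\log(3d)^{4/5}$, and then performs a single one-variable optimization of $u\mapsto 181u^{4/5}-\epsilon u$ (with $u=\log(3d)$), whose maximizer is exactly $u=A^5$. You instead work directly with $L^{3/4}\log L$ and split into cases: first on the size of $\epsilon$ (large $\epsilon$ handled by the linear term alone via $(\log L)/L^{1/4}\le 4/e$), then on $L\lessgtr A^5$, using monotonicity of $t^{3/4}\log t$ on the left range and a derivative estimate for $\Phi(L)=\tfrac{49}{2}L^{3/4}\log L-\epsilon L$ on the right range. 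Both proofs turn on the same numerical fact $\tfrac{490}{e}<181$ and both locate the critical threshold at $L=A^5$; the paper's route is shorter and avoids case analysis, while yours is more hands-on and, as a side effect, shows that the $-A^5$ in the exponent of $c(\epsilon)$ is not actually needed (you obtain $\Phi(L)\le 181A^4$ outright). One small caution: the estimate $15\log A + 4\le\tfrac{5792}{245}A^{1/4}$ that you call routine is in fact quite tight — the minimum of the difference over $A\ge 1$ is only about $0.1$ (near $A\approx 41.5$) — so if this were written up one would want to exhibit the verification explicitly rather than wave at it.
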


Theorem \ref{main thm} and Corollary \ref{explicitation de c(epsilon)} are proved in Section 3. Our proof strategy relies on that of Amoroso and Masser's result in \cite[Theorem 3.3]{[1]}. 

In particular, we divide our proof in two cases, according to the relative magnitude of the multiplicative rank $\rho(\alpha)$ of the subgroup of $\overline{\mathbb{Q}}^{\times}$ generated by the conjugates of $\alpha$, and the quantity $\log (3 \operatorname{deg}(\alpha))^{1 / 4}$, where $d(\alpha)=[\Q(\alpha):\Q]$. When $\rho(\alpha)>\log (3 \operatorname{deg}(\alpha))^{1 / 4}$, we conclude using a result of Amoroso and Viada \cite{[3]}, which we recall in Theorem \ref{thm in rank r}, bounding from below the products of heights of multiplicatively independent algebraic numbers. On the other hand, when $\rho(\alpha) \leq \log (3 \operatorname{deg}(\alpha))^{1 / 4}$ we conclude by applying a result of Amoroso and Delsinne \cite{[2]}, providing an explicit relative version of Dobrowolski's lower bound, where the degree $\operatorname{deg}(\alpha)=[\mathbb{Q}(\alpha): \mathbb{Q}]$ is replaced by a relative degree $[L(\alpha): L]$, where $L / \mathbb{Q}$ is a finite abelian extension. This is in contrast with Amoroso and Masser's proof of (\ref{thm-main}), which uses an older higher-dimensional generalization of Dobrowolski's lower bound proven by Amoroso and David \cite{[7]} when $\rho(\alpha)>\log (3 d(\alpha))^{1 / 4}$, and the relative Dobrowolski lower bound proven by Amoroso and Zannier when $\rho(\alpha) \leq \log (3 d(\alpha))^{1 / 4}$.

\section{Auxiliary Results} \label{auxiliary results}

We state  two important results that will be the key ingredients in our proof. The first was proved by Amoroso and Viada in \cite[Corollary 1.6]{[3]}, where, more generally, they give an explicit version of a generalized
Dobrowolski result on Lehmer's problem.

\begin{thm}[{\cite[Corollary 1.6]{[3]}}] \label{thm in rank r}
    Let $\alpha_{1}, \dots, \alpha_{n}$ be multiplicatively independent algebraic numbers in a number field K. Then $$h(\alpha_{1}) \dots h(\alpha_{n}) \ge [K:\Q]^{-1}(1050n^{5}\log(3[K:\Q]))^{-n^{2}(n+1)^{2}}.$$
\end{thm}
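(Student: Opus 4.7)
The plan is to follow the Dobrowolski transcendence method transplanted to the algebraic torus $\mathbb{G}_m^n$, which is the standard route for product-of-heights lower bounds of this type. Write $d = [K:\mathbb{Q}]$ and $H = h(\alpha_1) \cdots h(\alpha_n)$, and argue by contradiction, supposing that $H$ is strictly smaller than the claimed bound. Select integer parameters $T$ (the order of vanishing), $L_1, \dots, L_n$ (the partial degrees of an auxiliary polynomial), and a set $\mathcal{P}$ of rational primes, all calibrated carefully in terms of $n$, $d$, and $H$ so that the combinatorial inequalities required below are just barely satisfied.

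First, an arithmetic Siegel lemma (for instance, the absolute version of Bombieri--Vaaler, or Zhang's variant over number fields) produces a nonzero polynomial $P \in \mathbb{Z}[x_1, \dots, x_n]$, with partial degree at most $L_i$ in $x_i$ and logarithmic height controlled in terms of $L_i$, $T$, $d$, and the individual $h(\alpha_j)$, vanishing to order $T$ at every Galois conjugate of $\bm{\alpha} = (\alpha_1, \dots, \alpha_n)$ over $\mathbb{Q}$. Next, the extrapolation step invokes the Frobenius: for each unramified prime $p \in \mathcal{P}$, a residue-class argument combined with the product formula and the identity $h(\alpha_i^p) = p\, h(\alpha_i)$ forces $P$ to vanish, at a possibly reduced but still positive order, on the power points $\bm{\alpha}^p = (\alpha_1^p, \dots, \alpha_n^p)$ and their conjugates, producing many more zeros than were built into the Siegel construction.

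The conclusion is then obtained by applying a sharp zero estimate on $\mathbb{G}_m^n$---the Philippon-type estimate as refined by Amoroso--David is the tool of choice. Multiplicative independence of $\alpha_1, \dots, \alpha_n$ guarantees that the set $\{\bm{\alpha}^p : p \in \mathcal{P}\}$ together with its Galois conjugates does not cluster inside any proper algebraic subgroup of $\mathbb{G}_m^n$, so the zero estimate caps the total vanishing that $P$ can sustain. Comparing this cap with the vanishing forced by the previous two steps yields a contradiction unless $H$ exceeds the stated lower bound.

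The principal obstacle is the careful tracking of the explicit numerical constants, especially the factor $1050$ and the exponent $n^2(n+1)^2$: each of the three steps contributes combinatorial factors depending on $n$, and obtaining an exponent quadratic in $n(n+1)$ demands optimizing all parameter choices globally rather than one step at a time. A second technical difficulty is handling configurations in which, despite the multiplicative independence of the $\alpha_i$ over $\mathbb{Z}$, the enlarged set of power points and their conjugates degenerate onto a proper subtorus; this typically forces either an induction on $n$ with a descent to lower-dimensional subtori, or the use of a geometric zero estimate robust enough to absorb such configurations directly.
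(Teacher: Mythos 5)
The paper does not prove this theorem at all: it is quoted verbatim from Amoroso and Viada, \cite[Corollary 1.6]{[3]}, and used as a black box. So there is no proof in the paper to compare your attempt against; the only relevant comparison is with the argument in the cited reference itself.

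Your sketch does capture the general shape of the Amoroso--David / Amoroso--Viada machinery that lies behind the cited result: an auxiliary polynomial produced by a Siegel-type lemma, vanishing to high order at the Galois orbit of $\bm{\alpha}=(\alpha_1,\dots,\alpha_n)$; an extrapolation step using Frobenius at small primes together with $h(\alpha^p)=p\,h(\alpha)$; and a Philippon-type zero estimate on $\mathbb{G}_m^n$ to derive a contradiction. You also correctly flag the two genuine obstacles: (i) turning this scheme into a clean explicit bound with the stated numerical constants, and (ii) the possible degeneracy of the orbit of power points onto a proper subtorus, which Amoroso and Viada handle by an inductive descent built into their framework of essential minima and obstruction indices for subvarieties of $\mathbb{G}_m^n$, rather than by a direct point-counting argument as you phrase it. However, naming the obstacles is not the same as overcoming them. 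Everything that makes the statement a theorem rather than a heuristic --- the precise choice of $T$, $L_1,\dots,L_n$, $\mathcal{P}$, the exact form of the zero estimate and of the Siegel lemma with their constants, and the bookkeeping that produces the factor $1050\,n^5$ and the exponent $n^2(n+1)^2$ --- is absent from your write-up. As a guide to where to look (namely \cite{[3]} and its predecessor \cite{[7]}) the sketch is accurate; as a self-contained proof it has a gap at every quantitative step. Since the paper itself simply cites the result, the appropriate move here is either to cite \cite[Corollary 1.6]{[3]} as the paper does, or to reproduce that proof in full, not to outline it.
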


The second result was proved by Amoroso and Delsinne in \cite[Théorème 1.3]{[2]}, and provides a relative version of Dobrowolski's lower bound, when the base field considered is abelian. We recall a special case of this theorem, which is enough for our purposes.

\begin{thm}[{\cite[Théorème 1.3]{[2]}}] \label{thm in rank 1}
        Let $\alpha \in \overline{\Q}^{*}$ be not a root of unity and let $L$ be a number field. Then if  $L/\Q$ is a finite abelian extension and $D=[L(\alpha):L]$, we have \[h(\alpha) \ge D^{-1}\frac{\log\log(5D)^{3}}{\log(2D)^{4}}.\]
\end{thm}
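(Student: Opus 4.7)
The plan is to adapt Dobrowolski's classical transcendence proof to the relative setting over the abelian base $L$, following the overall pattern of the Amoroso-Zannier relative Dobrowolski bound. Assume for contradiction that
\[ h(\alpha) < D^{-1}\frac{\log\log(5D)^3}{\log(2D)^4}, \]
and aim at a contradiction via an auxiliary polynomial argument with Frobenius extrapolation. Choose two parameters $T$ (an order of vanishing) and $N$ (a degree bound) of respective orders $D\log\log(5D)^2$ and $D\log(2D)/\log\log(5D)$, so that $TN$ just exceeds a constant times $D^{2}\log(2D)\log\log(5D)$.

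First, apply a Bombieri-Vaaler type Siegel lemma over the ring $\mathcal{O}_L$, formulated so that its quantitative strength depends on the relative degree $D=[L(\alpha):L]$ rather than on the absolute degree $[L(\alpha):\Q]$. This produces a nonzero polynomial $P\in \mathcal{O}_L[X]$ of degree at most $N$ vanishing at $\alpha$ to order at least $T$, with coefficients whose house is explicitly controlled by the hypothetical upper bound for $h(\alpha)$.

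Second, perform the Frobenius extrapolation. By Kronecker--Weber $L\subseteq \Q(\zeta_n)$ for some conductor $n$, and for every rational prime $p\nmid n$ the Frobenius $\varphi_p\in\operatorname{Gal}(L/\Q)$ is well defined, and satisfies $\varphi_p(x)\equiv x^{p}\pmod{\mathfrak{p}}$ for every $x\in\mathcal{O}_L$ and every prime $\mathfrak{p}$ of $L$ above $p$. This is exactly where the abelian hypothesis enters: one gets a single Frobenius element, not merely a conjugacy class. Let $P^{\varphi_p}$ denote the polynomial obtained by applying $\varphi_p$ to the coefficients of $P$. For each prime $p$ in a dyadic window $[c_{1}N,c_{2}N]$, estimate $P^{\varphi_p}(\alpha^{p})$ in two ways: globally, the vanishing of $P$ at $\alpha$ to order $T$ combined with $h(\alpha^{p})=p\,h(\alpha)$ forces this algebraic number to be archimedeanly very small; locally, the congruence $P^{\varphi_p}(\alpha^{p})\equiv P(\alpha)^{p}\equiv 0\pmod{\mathfrak{P}}$ at any prime $\mathfrak{P}$ of $L(\alpha)$ above $p$ supplies divisibility by a large power of $\mathfrak{P}$. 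Combining both estimates via the product formula forces $P^{\varphi_p}(\alpha^{p})=0$ for every admissible $p$.

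Finally, conclude by a zero estimate. Each equality $P^{\varphi_p}(\alpha^{p})=0$, after pulling back through any lift $\tilde{\varphi}_p$ of $\varphi_p$ to $\operatorname{Gal}(\overline{\Q}/\Q)$, becomes the vanishing of $P$ at an element of the $L$-Galois orbit of $\alpha$. The prime number theorem, restricted to the arithmetic progression guaranteeing that $p$ is unramified in $L$ with controllable Frobenius, produces more such primes $p$ than $P$ can afford zeros on the size-$D$ orbit, contradicting $\deg P\le N$ once the multiplicity $T$ at $\alpha$ itself is folded in. The main obstacle, and the point that distinguishes this relative result from the classical $\Q$-case, is maintaining \emph{relative} bookkeeping throughout: the Siegel lemma, the archimedean small-value estimate, and the prime-counting step must each depend only on $D$ and not on $[L:\Q]$. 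This is made possible by the abelian assumption through the explicit cyclotomic description of the Frobenius, and optimizing $(T,N,[c_{1}N,c_{2}N])$ under these constraints yields the explicit factor $\log\log(5D)^{3}/\log(2D)^{4}$, exactly in the style of Voutier's explicit version of the classical Dobrowolski bound.
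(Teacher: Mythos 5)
The paper does not prove this statement from scratch: it is a direct citation of Amoroso--Delsinne \cite[Th\'eor\`eme 1.3]{[2]}, and the paper's ``proof'' consists solely of specializing that theorem to the case $\mathbb{K}=\Q$, where the factor $\left(g([\mathbb{K}:\Q])\cdot\Delta_{\mathbb{K}}\right)^{-c}$ in their lower bound equals $1$. You instead set out to reprove the underlying theorem by the transcendence method. Your outline does identify the correct architecture of the Amoroso--Zannier/Amoroso--Delsinne argument --- auxiliary polynomial via Siegel's lemma, extrapolation using the explicit cyclotomic Frobenius available because $L/\Q$ is abelian, congruences $P^{\varphi_p}(\alpha^p)\equiv P(\alpha)^p \pmod{\mathfrak{P}}$, and a final zero count --- and you correctly isolate where the abelian hypothesis enters and why the bookkeeping must depend only on $D$ and not on $[L:\Q]$.

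However, as a proof the proposal has a genuine gap: it is a strategy statement with no quantitative content, and the one quantitative choice you do make is inconsistent. With $\deg P\le N$ and vanishing at $\alpha$ to order $T$ over $L$ (hence at all $D$ conjugates of $\alpha$ over $L$, i.e.\ roughly $DT$ linear conditions), Siegel's lemma requires $N\gtrsim DT$; your choices $T\asymp D\log\log(5D)^2$ and $N\asymp D\log(2D)/\log\log(5D)$ give $DT\asymp D^2\log\log(5D)^2\gg N$, so the auxiliary polynomial you posit does not exist. The correct orders of magnitude in Dobrowolski-type arguments are $T\asymp(\log(2D)/\log\log(5D))^2$ and $N\asymp DT\log(2D)$, with primes of size comparable to a power of $\log(2D)$ rather than to $N$. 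More importantly, the specific numerical inequality claimed --- the factor $\log\log(5D)^3/\log(2D)^4$ with those exact constants $5$ and $2$, valid for \emph{all} $D\ge 1$ --- is the content of a long technical paper and cannot be obtained by asserting that ``optimizing yields'' it. Since the statement being proved is precisely this explicit bound, the appropriate proof here is the paper's: cite \cite{[2]} and check that the specialization $\mathbb{K}=\Q$ removes the discriminant-dependent factor.
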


\begin{proof}
    We apply \cite[Théorème 1.3]{[2]}, with $\mathbb{L}=L$ and $\mathbb{K}=\Q$. Notice that the lower bound in \cite{[2]} depends on a certain quantity $\left( g([\mathbb{K}:\Q])\cdot \Delta_{\mathbb{K}} \right)^{-c}$ which is $1$ for $\mathbb{K}=\Q$. This gives the desired result.

\end{proof}

The following lemma is a relative version of \cite[Lemma 2.2]{[1]}.

 \begin{lemme}  \label{3^rho^2}
Let $F/ K$ be a finite Galois extension and $\alpha \in F\setminus \{0\}$. Let $\alpha_{1},\ldots, \alpha_{d}$ be the conjugates of $\alpha$ over $K$, $i.e.$ the orbit of $\alpha$ under the action of $\mathrm{Gal}(F/K)$. Moreover let $\rho$ be the rank of the multiplicative group generated by $\alpha_{1},\ldots,\alpha_{d}$, and suppose that $\rho \ge 1$. Then there exists a subfield $L \subset F$ which is Galois over $K$ of degree $[L: K] \le n(\rho) $, such that $F$ contains a primitive $k$-th root of unity $\zeta_{k}$ and $\alpha^{k} \in L$, where $k$ is the order of the group of roots of unity in $F$.
We can take $$n(\rho)=\rho ! 2^{\rho} \mbox{ for } \rho=1,3,5 \mbox{ and } \rho >10. $$
Otherwise we have $$
\begin{array}{l|lllllll}
n(\rho) & 2 & 4 & 6 & 7 & 8 & 9 & 10 \\
\hline \rho & 12 & 1152 & 103680 & 2903040 & 696729600 & 1393459200 & 8360755200
\end{array}
$$
\end{lemme}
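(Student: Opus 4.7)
The plan is to construct $L$ as the fixed field $F^H$ of the kernel $H$ of the natural representation of $G = \mathrm{Gal}(F/K)$ on the torsion-free quotient of the multiplicative subgroup spanned by the conjugates of $\alpha$, and then to bound $[L:K]$ by appealing to the classification of finite subgroups of $\mathrm{GL}_\rho(\Z)$.

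More precisely, I would first set $\Gamma = \langle \alpha_1,\ldots,\alpha_d\rangle \subseteq F^\times$. By the structure theorem for finitely generated abelian groups, the torsion subgroup $\Gamma_{\mathrm{tors}}$ is finite cyclic; as it consists of roots of unity in $F$, its order divides $k$. The quotient $\Lambda := \Gamma/\Gamma_{\mathrm{tors}}$ is free abelian of rank $\rho$ by definition of $\rho$. Because $G$ permutes $\alpha_1,\ldots,\alpha_d$, it acts on $\Gamma$ and preserves $\Gamma_{\mathrm{tors}}$, so it induces a representation $\phi : G \to \mathrm{Aut}(\Lambda) \cong \mathrm{GL}_\rho(\Z)$. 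Setting $H = \ker \phi$, which is normal in $G$, and $L = F^H$, the extension $L/K$ is Galois with $[L:K] = [G:H] = |\phi(G)|$.

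Next, I would verify the two required properties of $L$. The fact that $\zeta_k \in F$ is immediate from the definition of $k$. For the membership $\alpha^k \in L$: given $h \in H$ and any index $i$, $h(\alpha_i)$ and $\alpha_i$ have the same image in $\Lambda$, so $h(\alpha_i) = \zeta_i\,\alpha_i$ for some $\zeta_i \in \Gamma_{\mathrm{tors}}$; since $\zeta_i$ is a root of unity in $F$ its order divides $k$, whence $h(\alpha_i^k) = \alpha_i^k$. Because $\alpha$ is one of the $\alpha_i$, this gives $h(\alpha^k) = \alpha^k$ for all $h \in H$, i.e.\ $\alpha^k \in F^H = L$.

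The main obstacle, and the source of the numerics in the statement, is bounding $|\phi(G)|$ as a finite subgroup of $\mathrm{GL}_\rho(\Z)$. This is a classical and well-studied problem: for $\rho = 1,3,5$ and $\rho > 10$, the maximum order of a finite subgroup of $\mathrm{GL}_\rho(\Z)$ equals $\rho!\,2^\rho$ and is realized by the hyperoctahedral group of signed permutations; for the exceptional small ranks $\rho \in \{2,4,6,7,8,9,10\}$ the maxima are precisely the values listed in the table, attained by (extensions of) the crystallographic Weyl groups $W(G_2)$, $W(F_4)$, $W(E_6)$, $W(E_7)$, $W(E_8)$, and the like. The proof is then completed by invoking this classification (going back to Minkowski and made explicit by Feit), which yields $[L:K] \le n(\rho)$.
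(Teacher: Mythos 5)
Your proof is correct and takes essentially the same route as the paper: both arguments produce a faithful $\mathrm{Gal}(L/K)$-action on a free $\Z$-module of rank $\rho$ and then invoke the Feit--R\'emond bound on the maximal order of a finite subgroup of $\mathrm{GL}_\rho(\Z)$. The only cosmetic difference is that you work with the torsion-free quotient $\Gamma/\Gamma_{\mathrm{tors}}$ and take $L$ to be the fixed field of the kernel of the induced representation, whereas the paper works with the $k$-th power subgroup $\langle\alpha_1^k,\ldots,\alpha_d^k\rangle$ and sets $L=K(\alpha_1^k,\ldots,\alpha_d^k)$; since the $k$-th power map induces a $G$-equivariant isomorphism $\Gamma/\Gamma_{\mathrm{tors}}\xrightarrow{\sim}\langle\alpha_1^k,\ldots,\alpha_d^k\rangle$, the two constructions yield the same field $L$.
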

 
\begin{proof}
Define $\beta_{i}=\alpha_{i}^{k}$ for $1\le i \le d$ and $L=K(\beta_{1},\dots,\beta_{d})$. We have $L \subset F$ because $F/K$ is Galois, and we easily check that $L/K$ is Galois. The $\Z-$module $$M=\{\beta_{1}^{a_{1}}\dots \beta_{d}^{a_{d}} \; | \; a_{1},\dots,a_{d} \in \Z \}$$ is torsion free by the choice of $k$ and so, by the classification of finitely generated abelian groups (\cite[Theorem 8.1 and Theorem 8.2]{Lang}), is free, of rank $\rho$. This shows that the action of $\mathrm{Gal}(L/K)$ over $M$ defines an injective representation $\mathrm{Gal}(L/K) \rightarrow \mathrm{GL}_{\rho}(\Z)$. We obtain that $\mathrm{Gal}(L/ K)$ identifies to a finite subgroup of $\mathrm{GL}_{\rho}(\Z)$. To conclude, we use a theorem stated by Feit in 1996, and proved by Rémond in \cite[Théorème 7.1]{GaelRemond}, which computes the largest cardinality of a finite subgroup of $\mathrm{GL}_{\rho}(\Z)$.
\end{proof}

\begin{rmq}
    The constant $n(\rho)$ in Lemma \ref{3^rho^2} is somehow optimal, since it is equal to the largest cardinality of a finite subgroup of $\mathrm{GL}_{\rho}(\Z)$.
\end{rmq}

We continue this section with the following lemma, which gives an explicit upper bound for the quotient of an integer over his Euler's totient.

\begin{lemme} \label{minoration explicite de l'indicatrice d'euler}
Let $\phi$ be the Euler's totient function. For every positive integer $n \ge 1$, we have  $$\frac{n}{\phi(n)} \le \frac{\log(\log(3n))}{\log(\log(3))}$$
and $$\phi(n) \ge \sqrt{\frac{n}{2}}.$$
    
\end{lemme}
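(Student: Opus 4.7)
The plan is to treat the two inequalities separately: the second is a direct consequence of multiplicativity, while the first reduces to an estimate at primorials and a Mertens-type bound.

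For the lower bound $\phi(n) \ge \sqrt{n/2}$, I would exploit multiplicativity. Writing $n = \prod_{p} p^{a_p}$, one has
\[
\frac{2\phi(n)^2}{n} \;=\; 2\prod_{p}(p-1)^{2}\, p^{\,a_p-2}.
\]
Each local factor $(p-1)^{2}p^{a_p-2}$ is at least $1$ in every case except $(p,a_p)=(2,1)$, where it equals $1/2$: indeed, for $p\ge 3$ and $a_p=1$ one has $(p-1)^{2}/p\ge 4/3$; for $p\ge 3$ and $a_p\ge 2$ one has $(p-1)^{2}p^{a_p-2}\ge 4$; and for $p=2$ with $a_p\ge 2$ one has $p^{a_p-2}\ge 1$. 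The single deficit at $(2,1)$ is offset by the external factor $2$, so the product is $\ge 1$, which is exactly the claim.

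For the upper bound on $n/\phi(n)$, the plan is to reduce to primorials. Since $n/\phi(n)=\prod_{p\mid n} p/(p-1)$ depends only on the radical of $n$, and the function $p \mapsto p/(p-1)$ is strictly decreasing, if $n$ has $k$ distinct prime factors $q_{1}<\cdots<q_{k}$ then $q_{i}\ge p_{i}$ (the $i$th rational prime) yields $n/\phi(n)\le M_{k}/\phi(M_{k})$, where $M_{k}=p_{1}p_{2}\cdots p_{k}$ is the $k$th primorial. At the same time $n\ge M_{k}$ and the right-hand side of the target inequality is monotone in $n$, so the problem reduces to proving
\[
\frac{M_{k}}{\phi(M_{k})}\;\le\;\frac{\log\log(3M_{k})}{\log\log 3}\qquad(k\ge 0).
\]

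For large $k$ this follows from explicit Mertens/Rosser--Schoenfeld estimates $\prod_{p\le x} p/(p-1)\le e^{\gamma}(\log x+1/\log x)$, together with a Chebyshev lower bound $\theta(p_{k})\ge c\,p_{k}$ which gives $p_{k}=O(\log M_{k})$ and hence $M_{k}/\phi(M_{k})=O(\log\log M_{k})$. Since $e^{\gamma}\approx 1.78$ is far below $1/\log\log 3\approx 10.64$, there is enormous slack once $k$ exceeds a modest threshold; the remaining finitely many $k$ are checked by direct computation of $M_{k}$, $\phi(M_{k})$ and $\log\log(3M_{k})$. The only real obstacle is administrative, namely pinning down the threshold at which the asymptotic bound takes over and performing the finite check below it; no new ideas beyond classical prime-counting estimates are required.
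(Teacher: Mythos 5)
Your proposal is correct, and it differs from the paper's proof in an interesting way on the second inequality. The paper deduces $\phi(n)\ge\sqrt{n/2}$ from the first inequality (with a check on small $n$), whereas you give a direct, self-contained argument by writing $2\phi(n)^2/n$ as $2\prod_p (p-1)^2 p^{a_p-2}$ and observing that every local factor is $\ge 1$ except the single case $(p,a_p)=(2,1)$, which contributes exactly $1/2$ and is absorbed by the leading $2$. That is cleaner and makes the two inequalities logically independent, which is arguably a small improvement.

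For the first inequality, the two proofs are essentially the same in substance: both ultimately rest on the explicit Rosser--Schoenfeld estimates. The paper invokes their Theorem 15 directly, which already bounds $n/\phi(n)$ by $e^{\gamma}\log\log n + 2.50637/\log\log n$ for all $n\ge 3$, so no reduction to primorials is needed; one only has to compare this to $\log\log(3n)/\log\log 3$ for $n\ge 100$ and check the small cases by hand. Your route reduces to primorials first (using that $n/\phi(n)$ depends only on the radical, that $p/(p-1)$ is decreasing, and that $n\ge M_k$ forces the right-hand side up), and then applies the Mertens-type bound $\prod_{p\le x}p/(p-1)\le e^{\gamma}(\log x + 1/\log x)$ together with a Chebyshev bound to relate $p_k$ to $\log M_k$. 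This works and is a valid alternative, but it is a slightly longer path: Rosser--Schoenfeld's Theorem 15 already packages the primorial reduction and the Mertens bound into a single clean statement about $n/\phi(n)$, so the detour buys nothing here unless you want a proof that avoids citing that specific theorem. Both approaches leave the same residual administrative work (a finite verification for small $n$ or small $k$), which you correctly flag.
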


\begin{proof}
The first inequality can be easily deduced from \cite[Theorem 15]{Phi(n)<sqrt(n/2)} for \( n \ge 100 \) and verified for smaller values of \( n \). Indeed, if we denote by \( \gamma \) the Euler's constant, then for \( n \ge 100 \), we have
\[ 
e^{\gamma} \log(\log(n)) + \frac{2.50637}{\log(\log(n))} \le 3 \log(\log(3n)) \le \frac{\log(\log(3n))}{\log(\log(3))}. 
\]
The second inequality is well-known, and can be deduced easily from the previous one.

\end{proof}

We conclude this section by recalling Stirling's upper bound (\cite[(1) and (2) on p. 26]{StirlingBounds}), which will be use in the next section.

\begin{lemme} \label{Stirling upper bound}
    For every $n\ge1$, we have $$\frac{n!}{n^{n}} \le \sqrt{2\pi n} e^{\frac{1}{12n}}e^{-n}.$$
\end{lemme}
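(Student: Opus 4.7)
The statement is Stirling's classical inequality with the explicit one-sided correction factor $e^{1/(12n)}$, so my plan is simply to invoke it from the cited source \cite[(1) and (2) on p.\,26]{StirlingBounds}. In its standard form, that reference provides the two-sided estimate
$$\sqrt{2\pi n}\,(n/e)^{n}\, e^{1/(12n+1)} \;\le\; n! \;\le\; \sqrt{2\pi n}\,(n/e)^{n}\, e^{1/(12n)},$$
valid for every integer $n \ge 1$; dividing the right-hand inequality through by $n^{n}$ yields the claimed bound at once.

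If one wished to reprove it \emph{ab initio}, the cleanest route goes through the Binet representation of $\log \Gamma$: one expresses the remainder
$\log n! - (n+1/2)\log n + n - (1/2)\log(2\pi)$
as an integral of the form $\int_{0}^{\infty}(\{t\}-\{t\}^{2}+1/6)/(2(n+t))\,dt$, obtained from the Euler--Maclaurin expansion, and then bounds this quantity by $1/(12n)$ via elementary estimates on the integrand. An equivalent approach is to verify that the sequence $a_{n}= n!\,e^{n}/(\sqrt{n}\,n^{n}\,e^{1/(12n)})$ is increasing, with limit identified as $\sqrt{2\pi}$ through Wallis' product; the base case $n=1$ is then checked numerically.

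There is essentially no obstacle here. The only delicate point is to insist on a bound sharp enough that the multiplicative correction factor is precisely $e^{1/(12n)}$ rather than something slightly larger such as $e^{1/(10n)}$, since this exact constant is what feeds into the bookkeeping of the proof of Theorem \ref{main thm} in Section 3. Once the reference \cite{StirlingBounds} is invoked the lemma follows by a single algebraic rearrangement, and there is nothing further to prove.
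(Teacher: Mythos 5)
Your proposal matches the paper exactly: the lemma is simply recalled from Robbins \cite[(1) and (2) on p.~26]{StirlingBounds}, which gives $\sqrt{2\pi n}\,(n/e)^{n} e^{1/(12n+1)} \le n! \le \sqrt{2\pi n}\,(n/e)^{n} e^{1/(12n)}$, and the stated bound is the right-hand inequality divided by $n^{n}$. The additional \emph{ab initio} sketch is correct but unnecessary, since the paper gives no independent proof.
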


\section{Proof of Theorem \ref{main thm}} \label{proof of main thm}
We can now establish the proof of Theorem \ref{main thm}. To this end, we fix for the rest of this section an algebraic number $\alpha \in \overline{\Q}$ such that $\Q(\alpha)/\Q$ is Galois.
    Firstly, we will prove a lower bound for $h(\alpha)$ depending on the multiplicative rank $\rho$ of the conjugates of $\alpha$, and on the degree $d$ of $\Q(\alpha)$ over $\Q$.

    \begin{lemme} \label{minoration avec dépendance en rho}
        Let 
        $$
        g_{1}(\rho,d) = \min_{1\le r \le \rho} \left( d^{1/r} \left(1050 r^{5}\log(3d) \right)^{r(r+1)^{2}} \right)
        $$
        and
        $$
        g_{2}(\rho,d) = 6.5 \cdot 10^{7} \rho^{\rho+5} \log \left(\log \left(6d^{2} \right) \right)^{5}.
        $$
        Then:
        $$
        h(\alpha)^{-1} \le \min \left( g_{1}(\rho,d), g_{2}(\rho,d) \right).
        $$
    \end{lemme}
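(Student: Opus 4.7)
The plan is to establish the two bounds $h(\alpha)^{-1}\le g_1(\rho,d)$ and $h(\alpha)^{-1}\le g_2(\rho,d)$ by separate routes. The bound $g_1$ comes directly from Theorem~\ref{thm in rank r}: for each $1\le r\le\rho$, the rank hypothesis guarantees that one can select $r$ multiplicatively independent conjugates $\alpha_{i_1},\ldots,\alpha_{i_r}$ of $\alpha$, all lying in $K=\Q(\alpha)$, which has degree $d$. Since conjugates share the same height $h(\alpha)$, Theorem~\ref{thm in rank r} gives
\[
h(\alpha)^r=h(\alpha_{i_1})\cdots h(\alpha_{i_r})\ge d^{-1}\bigl(1050\,r^5\log(3d)\bigr)^{-r^2(r+1)^2},
\]
so $h(\alpha)^{-1}\le d^{1/r}\bigl(1050\,r^5\log(3d)\bigr)^{r(r+1)^2}$. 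Minimising over $r\in\{1,\ldots,\rho\}$ yields exactly $h(\alpha)^{-1}\le g_1(\rho,d)$.

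For $g_2$, I would first apply Lemma~\ref{3^rho^2} to $F=\Q(\alpha)$, which produces a subfield $L\subset F$ Galois over $\Q$ with $[L:\Q]\le n(\rho)$, a primitive $N$-th root of unity $\zeta_N\in F$ (where $N$ is the torsion order of the group of roots of unity in $F$), and $\alpha^N\in L$. The $N$ roots of $X^N-\alpha^N$ over $L$ are the $\alpha\zeta_N^j$, all lying in $L(\zeta_N)$, so $\alpha\in L(\zeta_N)$. I would then apply Theorem~\ref{thm in rank 1} to $\alpha$ with abelian base field $\mathbb{L}=\Q(\zeta_N)$, setting $D=[\Q(\zeta_N)(\alpha):\Q(\zeta_N)]$. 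Two observations are key: since $\zeta_N\in\Q(\alpha)$, we have $\Q(\zeta_N)(\alpha)=\Q(\alpha)$, whence $D=d/\phi(N)$; and the containment $\Q(\zeta_N)(\alpha)\subseteq L(\zeta_N)$ gives $D\le[L:\Q]\le n(\rho)$. Theorem~\ref{thm in rank 1} then produces the $d$-free estimate
\[
h(\alpha)^{-1}\le D\cdot\frac{\log(2D)^4}{\log\log(5D)^3}\le\frac{n(\rho)\log(2n(\rho))^4}{(\log\log 5)^3},
\]
using $D\le n(\rho)$ in the numerator (increasing in $D$) and $D\ge 1$ in the denominator (also increasing in $D$).

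The main obstacle is the final numerical consolidation into the precise shape of $g_2(\rho,d)=6.5\cdot10^7\rho^{\rho+5}\log\log(6d^2)^5$. For $\rho\ge 11$, I would apply Stirling's inequality (Lemma~\ref{Stirling upper bound}) to bound $n(\rho)=\rho!\,2^\rho$ by a constant times $\rho^{\rho+5}$; the gap factor $(2/e)^\rho$ coming from Stirling comfortably absorbs the loss of $\rho^5$, and the polynomial factor $\log(2n(\rho))^4=O((\rho\log\rho)^4)$ is in turn absorbed into the extra powers of $\rho$ hidden in $\rho^{\rho+5}$. The small exceptional values $\rho\in\{2,4,6,7,8,9,10\}$ from the table in Lemma~\ref{3^rho^2} must be handled by direct numerical verification, as must $\rho=1$. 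Finally, since $\log\log(6d^2)^5\ge(\log\log 6)^5>0$ for every $d\ge 1$, this factor serves as padding that yields a clean $d$-dependence in $g_2$ and, combined with the generous constant $6.5\cdot 10^7$, comfortably absorbs $(\log\log 5)^{-3}$ together with all remaining numerical slack from the small-$\rho$ case analysis, which is the most delicate part of the computation.
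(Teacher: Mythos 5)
Your proof of the bound $h(\alpha)^{-1}\le g_1(\rho,d)$ is correct and is exactly the paper's argument. The bound for $g_2$, however, has a genuine gap.

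You assert that the roots $\alpha\zeta_N^j$ of $X^N-\alpha^N$ ``all lie in $L(\zeta_N)$, so $\alpha\in L(\zeta_N)$,'' and deduce $\Q(\zeta_N)(\alpha)\subseteq L(\zeta_N)$ and hence $D\le[L:\Q]\le n(\rho)$. This step is circular: the roots $\alpha\zeta_N^j$ lie in $L(\zeta_N)$ if and only if $\alpha$ itself does, which is precisely what you are trying to prove. In fact $X^N-\alpha^N$ has coefficients in $L$ but need not split over $L(\zeta_N)$; the splitting field is $L(\zeta_N,\alpha)$, and Kummer theory only gives that $[L(\zeta_N,\alpha):L(\zeta_N)]$ \emph{divides} $N$, not that it equals $1$. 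A minimal counterexample: $\alpha=\sqrt 2$, $N=k=2$, so $L=\Q$ and $L(\zeta_2)=\Q$, yet $\alpha\notin\Q$. The conclusion $D\le n(\rho)$ therefore does not follow, and it cannot hold in general: it would make $g_2$ independent of $d$, which is not what the lemma asserts and is not the shape the paper can prove.

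The paper avoids this by not trying to place $\alpha$ in $L(\zeta_k)$. Writing $M=\Q(\zeta_k)$ and using $M(\alpha)=L(\alpha)=\Q(\alpha)$, it computes
\[
[M(\alpha):M]=\frac{[L(\alpha):L]\,[L:\Q]}{[M:\Q]}
\le\frac{k}{\phi(k)}\,[L:\Q],
\]
using $[L(\alpha):L]\le k$ (since $\alpha$ is a root of $X^k-\alpha^k\in L[X]$) and $[M:\Q]=\phi(k)$. The extra factor $k/\phi(k)$ is then bounded via Lemma~\ref{minoration explicite de l'indicatrice d'euler}, together with $\phi(k)\le d$ (so $k\le 2d^2$), giving $k/\phi(k)\le\log\log(6d^2)/\log\log 3$. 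This is exactly the source of the $\log\log(6d^2)^5$ factor in $g_2$, which your argument silently discarded. Once $D\le C\,\rho!\,2^\rho\log\log(6d^2)$ is in hand, the remainder of your plan (monotonicity of the Dobrowolski-type bound in $D$, Stirling to convert $\rho!\,2^\rho$ into $\rho^\rho$, and a numerical check for the small exceptional $\rho$) is sound and parallels the paper.
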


   \begin{proof}
    Let $\alpha_{1},\ldots,\alpha_{\rho}$ be multiplicatively independent conjugates of $\alpha$, and fix $r \in \{1,\ldots,\rho \}$. Since $\Q(\alpha)/\Q$ is Galois by assumption, we have that $\alpha_{1},\ldots,\alpha_{r} \in \Q(\alpha)$. Therefore, applying Theorem \ref{thm in rank r} to $\alpha_{1},\ldots,\alpha_{r}$ we see that 
   
    \begin{equation*} 
        h(\alpha)=(h(\alpha_{1})\cdots h(\alpha_{r}))^{1/r} \ge d^{-1/r} \left(1050 r^{5}\log(3d) \right)^{-r(r+1)^{2}},
    \end{equation*}
    which implies that $h(\alpha)\ge g_{1}(\rho,d)^{-1}$.

    Let $k$ be the number of roots of unity in $\Q(\alpha)$. Then, by Lemma \ref{3^rho^2} there exists a subfield $L \subset \Q(\alpha)$ whose degree can be bounded as follows: $$[L:\Q] \le 135 \rho! 2^{\rho-1},$$ such that the extension $L/\Q$ is Galois and $\alpha^{k} \in L$. Set $M=\Q(\zeta_{k})$. We have $M(\alpha)=\Q(\alpha)=L(\alpha)$. Notice that
    \begin{align*}
        [M(\alpha):M] &= [L(\alpha):L] \frac{[L:\Q]}{[M:\Q]}.
    \end{align*}

    Since $\alpha$ is a root of $X^{k}-\alpha^{k} \in L[X]$, we have that $[L(\alpha):L] \le k$. We also notice that $\phi(k) =[M:\Q]$. Therefore, we obtain the following chain of inequalities:
    $$[M(\alpha):M] \le k\frac{[L:\Q]}{[M:\Q]} \le \frac{k}{\phi(k)}[L: \Q].$$

    By Lemma \ref{minoration explicite de l'indicatrice d'euler}, we have that
    $$\frac{k}{\phi(k)} \le \frac{\log(\log(3k))}{\log(\log(3))},$$
    so we obtain
    \begin{equation*}
        [M(\alpha):M] \le 135\frac{\log(\log(3k))}{\log(\log(3))} \rho! 2^{\rho-1}.
    \end{equation*}

    Since $k$ is the number of roots of unity contained in $L$, we have that $\phi(k) \le d$.
By the last statement of Lemma \ref{minoration explicite de l'indicatrice d'euler}, we have that
    $\sqrt{k/2} \le \phi(k)$, hence
    $k \le 2d^{2}$ and
    \begin{align*}
        [M(\alpha):M] &\le 135\frac{\log \left(\log \left(6d^{2} \right) \right)}{\log(\log(3))}  \rho! 2^{\rho-1}
        \le 718 \rho ! 2^{\rho}\log \left(\log \left(6d^{2} \right) \right).
    \end{align*}

    By Lemma \ref{Stirling upper bound}, applied to $\rho$, we have that
    \begin{align*}
[M(\alpha):M] &\le 718\rho^{\rho} \sqrt{2 \pi \rho} e^{\frac{1}{12 \rho}}e^{-\rho} 2^{\rho} \log \left(\log \left(6d^{2} \right) \right) \\
& \le 1440 \rho^{\rho} \log \left(\log \left(6d^{2} \right) \right)
\end{align*}

    By Theorem \ref{thm in rank 1}, applied to $\Q(\zeta_{k})/ \Q$, we have
    \begin{equation*}
        h(\alpha) \ge [M(\alpha):M]^{-1}\frac{\log \left(\log \left(5[M(\alpha):M] \right) \right)^{3}}{\log(2[M(\alpha):M])^{4}}.
    \end{equation*}

    Since
    $$x \mapsto \frac{1}{x} \frac{\log(\log(5x))^{3}}{\log(2x)^{4}}$$
    is a decreasing map on $[1,+\infty[$, we deduce, by setting
    $$X(\rho,d)=1440 \rho^{\rho} \log \left(\log \left(6d^{2} \right) \right) $$
    that
    \begin{align*}
        h(\alpha)^{-1} &\le X(\rho,d)\log(2X(\rho,d))^{4} \\
 &= 1440 \rho^{\rho} \log \left(\log \left(6d^{2} \right) \right) \log \left( 4884 \rho^{\rho} \log \left(\log \left(6d^{2} \right) \right) \right)^{4} \\
 &\le  12448 \rho^{\rho+4} \log \left(\log \left(6d^{2} \right) \right)^{5} \log \left( 4884^{1/\rho} \rho \right)^{4} \\
 & \le 6.5 \cdot 10^{7} \rho^{\rho+5} \log \left(\log \left(6d^{2} \right) \right)^{5}
\end{align*}
    This gives the second desired lower bound for $h(\alpha)$, and therefore proves the lemma.
\end{proof}

\begin{proof}[Proof of Theorem \ref{main thm}] Now, we are ready to prove Theorem \ref{main thm}. Suppose, first, that $\rho \leq \log(3d)^{1/4}$. By Lemma \ref{minoration avec dépendance en rho}, we get
\begin{align*} 
    h(\alpha)^{-1} &\le g_{2}(\rho,d)\le 6.5 \cdot 10^{7} \left(\log(3d)^{1/4} \right)^{\log(3d)^{1/4}+5} \log \left(\log \left(6d^{2} \right) \right)^{5} \\
    &\le 6.5 \cdot 10^{7}\exp \left( \left(\log(3d)^{1/4}+5 \right) \log \left(\log(3d)^{1/4} \right)+5\log \left(\log \left(\log \left(6d^{2} \right) \right) \right)  \right).\\
    &\le 6.5 \cdot 10^{7} \exp \left(\frac{3}{2}\log(3d)^{1/4} \log(\log(3d))+2 \log(3d)^{1/4} \log(\log(3d))  \right).
\end{align*}
So, we have  \begin{equation} \label{premier cas epsilon<1}
    h(\alpha)^{-1} \le 6.5 \cdot 10^{7}\exp \left( \frac{7}{2}\log(3d)^{1/4} \log(\log(3d))  \right).
\end{equation}

 Now, we suppose that $\rho \ge \log(3d)^{1/4}$. We let $r= \left\lceil \log(3d)^{1/4} \right\rceil$. In particular $r \leq \rho$. Then, thanks again to Lemma \ref{minoration avec dépendance en rho}, we obtain
\begin{align*}
    h(\alpha)^{-1} &\le g_{1}(\rho,d)= d^{1/r} \left(1050 r^{5}\log(3d) \right)^{r(r+1)^{2}} \\
    &\le d^{1/\log(3d)^{1/4}} \left(1050 \left(\log(3d)^{1/4}+1 \right)^{5}\log(3d) \right)^{\left(\log(3d)^{1/4}+1 \right)\left(\log(3d)^{1/4}+2 \right)^{2}}\\
    &\le d^{1/\log(3d)^{1/4}} \left(31693\log(3d)^{9/4} \right)^{6\log(3d)^{3/4}} \\
    &\le  \exp{ \left( \log(3d)^{3/4}+6\log(3d)^{3/4}\log \left(31693\log(3d)^{9/4} \right) \right)}.
\end{align*} 

So, we have  
\begin{equation} \label{2eme cas epsilon<1}
    h(\alpha)^{-1} \le  31693 \exp{ \left(\log(3d)^{3/4}+\frac{27}{2}\log(3d)^{3/4}\log(\log(3d)) \right)}.
\end{equation}

 From \eqref{premier cas epsilon<1} and \eqref{2eme cas epsilon<1}, we obtain
\begin{equation*}
     h(\alpha)^{-1} \le 6.5 \cdot 10^{7}\exp{ \left(\frac{49}{2} \log(3d)^{3/4}\log(\log(3d))   \right)}.
\end{equation*}
\end{proof}

\begin{proof}[Proof of Corollary \ref{explicitation de c(epsilon)}.] 
We fix $\epsilon > 0$. Notice that the function $$f(x)=\frac{\log(\log(3x))}{\log(3x)^{1/20}}$$ has a maximum at $$x=\frac{e^{e^{20}}}{3}.$$ Hence, we have that 
$$
\log(\log(3d)) \le \frac{20}{e} \log(3d)^{1/20},
$$
which implies 
\begin{equation} \label{equation1}
    \frac{49}{2} \log(3d)^{3/4}\log(\log(3d)) - \epsilon \log(d) \le 181 \log(3d)^{4/5} - \epsilon \log(d).
\end{equation}

Also, the function $f_{\epsilon}(x) = 181 \log(3x)^{4/5} - \epsilon \log(x)$ has its maximum at 
$$
x = \frac{1}{3}\exp{\left( \frac{724}{5\epsilon} \right)^{5}}.
$$
Therefore, we have 
\begin{equation} \label{equation2}
181 \log(3d)^{4/5} - \epsilon \log(d) \le 181 \left( \frac{724}{5\epsilon}\right)^{4} - \epsilon \log(1/3) - \left( \frac{724}{5\epsilon}\right)^{5}.
\end{equation}
Hence, by \eqref{equation1} and \eqref{equation2}, we have that 
$$
-\frac{49}{2} \log(3d)^{3/4}\log(\log(3d)) \ge -181 \left( \frac{724}{5\epsilon}\right)^{4} +\epsilon \log(1/3) + \left( \frac{724}{5\epsilon}\right)^{5}-\epsilon \log(d).
$$
and we conclude by Theorem \ref{main thm}.
  
\end{proof}

\section*{Acknowledgments} 
I am grateful to Gaël Rémond for comments and suggestions on an earlier draft of this article, and for pointing out the reference \cite{GaelRemond}.

\noindent I am also very grateful to the anonymous referee for the numerous insightful remarks and suggestions, which improved the quality of this paper. In particular I thank the referee for pointing out reference \cite{Phi(n)<sqrt(n/2)} which improved Lemma \ref{minoration explicite de l'indicatrice d'euler}.

\bibliographystyle{alpha}

\begin{thebibliography}{R{\'e}m20}

\bibitem[AD99]{[7]}
Francesco Amoroso and Sinnou David.
\newblock Le problème de Lehmer en
dimension supérieure.
\newblock {\em J. Reine Angew. Math.}, 513:145--179, 1999.

\bibitem[ADZ14]{Amoroso_David_Zannier-OnFieldWithPropertyB}
Francesco Amoroso, Sinnou David, and Umberto Zannier.
\newblock On fields with property ({B}).
\newblock {\em Proc. Am. Math. Soc.}, 142(6):1893--1910, 2014.

\bibitem[AD00]{[13]}
Francesco Amoroso and Roberto Dvornicich.
\newblock A lower bound for the height in abelian extensions.
\newblock {\em J. Number Theory}, 80(2):260--272, 2000.

\bibitem[AD07]{[2]}
Francesco Amoroso and Emmanuel Delsinne.
\newblock Une minoration relative explicite pour la hauteur
dans une extension d’une extension abélienne. 
\newblock {\em Diophantine geometry}, 1--24, 2007.

\bibitem[AM16]{[1]}
Francesco Amoroso and David Masser.
\newblock Lower bounds for the height in {Galois} extensions.
\newblock {\em Bull. Lond. Math. Soc.}, 48(6):1008--1012, 2016.

\bibitem[AV12]{[3]}
Francesco Amoroso and Evelina Viada.
\newblock Small points on rational subvarieties of tori.
\newblock {\em Comment. Math. Helv.}, 87(2):355--383, 2012.

\bibitem[AZ00]{Amoroso_Zannier-RelativeDobrowolskiLowerBoundOverAbelianExtensions}
Francesco Amoroso and Umberto Zannier.
\newblock A relative {Dobrowolski} lower bound over abelian extensions.
\newblock {\em Ann. Sc. Norm. Super. Pisa, Cl. Sci., IV. Ser.}, 29(3):711--727, 2000.

\bibitem[AZ10]{Amo_Zannier-DihedralCase}
Francesco Amoroso and Umberto Zannier.
\newblock A uniform relative {Dobrowolski}'s lower bound over {Abelian} extensions.
\newblock {\em Bull. Lond. Math. Soc.}, 42(3):489--498, 2010.

\bibitem[AZ10]{[8]}
Francesco Amoroso and Umberto Zannier.
\newblock A uniform relative {Dobrowolski}'s lower bound over {Abelian}
  extensions.
\newblock {\em Bull. Lond. Math. Soc.}, 42(3):489--498, 2010.

\bibitem[Amo18]{amoroso2018mahler}
Francesco Amoroso.
\newblock Mahler measure on {Galois} extensions.
\newblock {\em Int. J. Number Theory}, 14(6):1605--1617, 2018.

\bibitem[BG06]{BombieriGubler}
Enrico Bombieri and Walter Gubler.
\newblock { Heights in {Diophantine} geometry}, volume~4 of {\em New Math. Monogr.}
\newblock Cambridge: Cambridge University Press, 2006.



\bibitem[BZ01]{bombieriZannier2001note}
Enrico Bombieri and Umberto Zannier.
\newblock A note on heights in certain infinite extensions of {{\(\mathbb Q\)}}.
\newblock {\em Atti Accad. Naz. Lincei, Cl. Sci. Fis. Mat. Nat., IX. Ser., Rend. Lincei, Mat. Appl.}, 12(1):5--14, 2001.

\bibitem[Dob79]{[12]}
Edward Dobrowolski.
\newblock On a question of {Lehmer} and the number of irreducible factors of a
  polynomial.
\newblock {\em Acta Arith.}, 34:391--401, 1979.

\bibitem[Jen24]{jonathanAlternatinggroup}
Jonathan Jenvrin.
\newblock On the height of some generators of galois extensions with big galois group. {\em arXiv: 2403.00500}, 2024. 

\bibitem[Leh33]{Lehmer}
Derrick Henry Lehmer.
\newblock Factorization of certain cyclotomic functions.
\newblock {\em Ann. Math. (2)}, 34:461--479, 1933.

\bibitem[Lan02]{Lang}
Serge Lang.
\newblock { Algebra.}, volume 211 of {\em Grad. Texts Math.}
\newblock New York, NY: Springer, 3rd revised ed. edition, 2002.

\bibitem[R{\'e}m20]{GaelRemond}
Ga{\"e}l R{\'e}mond.
\newblock Degré de définition des endomorphismes d’une variété abélienne.
\newblock {\em J. Eur. Math. Soc.}, 22(9):3059--3099, 2020.

\bibitem[Rob55]{StirlingBounds}
Herbert Robbins.
\newblock A remark on {Stirling}'s formula.
\newblock {\em Am. Math. Mon.}, 62:26--29, 1955.

\bibitem[RS62]{Phi(n)<sqrt(n/2)}
J.~Barkley Rosser and Lowell Schoenfeld.
\newblock Approximate formulas for some functions of prime numbers.
\newblock {\em Ill. J. Math.}, 6:64--94, 1962.

\bibitem[Sch73]{[10]}
Andrzej Schinzel.
\newblock On the product of the conjugates outside the unit circle of an
  algebraic number.
\newblock {\em Acta Arith.}, 24:385--399, 1973.


\bibitem[Smy71]{SmythCJ}
Christopher James Smyth.
\newblock On the product of the conjugates outside the unit circle of an algebraic integer.
\newblock {\em Bull. Lond. Math. Soc.}, 3:169--175, 1971.

\bibitem[Vou96]{voutier}
Paul~M. Voutier.
\newblock An effective lower bound for the height of algebraic numbers.
\newblock {\em Acta Arith.}, 74(1), 1996.






\end{thebibliography}

JONATHAN JENVRIN: Univ. Grenoble Alpes, CNRS, IF, 38000 Grenoble, France

\textit{E-mail adress :} \href{mailto:jonathan.jenvrin@univ-grenoble-alpes.fr}{\texttt{jonathan.jenvrin@univ-grenoble-alpes.fr}}

\end{document}